\newtheorem{theorem}{Theorem}
\newtheorem{remark}{Remark}
\newtheorem{assumption}{Assumption}
\newtheorem{proof}{Proof}
\begin{document}

\title{\LARGE Nonlinear Control of an AC-connected DC MicroGrid}
%%\title{Management of the Interconnection of Intermittent Photovoltaic Systems Through a DC Link and Storages}
%%\author{Alessio, Sabah, Gilney, Abdelkrim, Françoise, Elena, Marika}
%%\author{A. Iovine,}
%%\author{S. B. Siad,}
%%\author{G. Damm,}
%%\author{A. Benchaib,}
%%\author{F. Lamnabhi-Lagarrigue,}
%%\author{E. De Santis,}
%%\author{M. D. Di Benedetto}

%\author{A. Iovine\thanks{Alessio Iovine, Elena De Santis and Marika Di Benedetto are with the Center of Excellence DEWS, Department of Information Engineering, Computer Science and Mathematics, University of L'Aquila, Italy.  Email: alessio.iovine@graduate.univaq.it, \{elena.desantis, mariadomenica.dibenedetto\}@univaq.it. Corresponding author: Alessio Iovine. Permanent email: alessio.iovine@hotmail.com}, G. Damm\thanks{Gilney Damm is with IBISC - Universit\`e d'Evry Val d'Essonne, Evry, France. Email: gilney.damm@ibisc.fr}, E. De Santis, M. D. Di Benedetto}

\author{A. Iovine$^a$\thanks{$^a$ Alessio Iovine, Elena De Santis and Marika Di Benedetto are with the Center of Excellence DEWS, Department of Information Engineering, Computer Science and Mathematics, University of L'Aquila, Italy.  Email: alessio.iovine@graduate.univaq.it, \{elena.desantis, mariadomenica.dibenedetto\}@univaq.it.}, S. B. Siad$^b$, G. Damm$^b$\thanks{$^b$ Sabah B. Siad and Gilney Damm are with IBISC - Universit\`e d'Evry Val d'Essonne, Evry, France. Email: gilney.damm@ibisc.fr, siadsabah@yahoo.fr  \newline This work is partially supported iCODE project.}, E. De Santis$^a$, M. D. Di Benedetto$^a$}
\maketitle
%
%%\vspace{-1.50cm}
%

\begin{abstract}
New connection constraints for the power network (Grid Codes) require more flexible and reliable systems, with robust solutions to cope with uncertainties and intermittence from renewable energy sources (renewables), such as photovoltaic arrays. A solution for interconnecting such renewables to the main grid is to use storage systems and a Direct Current (DC) MicroGrid. A "Plug and Play" approach based on the "System of Systems" philosophy using distributed control methodologies is developed in the present work. This approach allows to interconnect a number of elements to a DC MicroGrid as power sources like photovoltaic arrays, storage systems in different time scales like batteries and supercapacitors, and loads like electric vehicles and the main AC grid. The proposed scheme can easily be scalable to a much larger number of elements.
\end{abstract}%

%\begin{IEEEkeywords}
%, grid stability, mixed MicroGrid,
%\end{IEEEkeywords}}
\renewcommand{\abstractname}{Keywords}
%\vspace{-0.3cm}
\begin{abstract}
DC/AC MicroGrid, Power generation control, Lyapunov methods, Grid stability
\end{abstract}%

%\nocite{Iovine2016DCmicrogrids}
\vspace{-0.2cm}
\section{INTRODUCTION}\label{Sec_Introduction}
\vspace{-0.1cm}
Renewable energy is the key for locally producing clean and inexhaustible energy to supply the world's increasing demand for electricity. Photovoltaic (PV) conversion of solar energy is a promising way to meet the growing demand for energy, and is the best fit in several situations \cite{Eltawil2010PVconnectedgridproblems}. However, its intermittent nature remains a real disability that can create voltage (or even frequency in the case of islanded MicroGrids) instability for large scale grids. In order to answer to the new constraints of connection to the network (Grid Codes) it is possible to consider storage devices \cite{Barton2004energystorage}, \cite{Krajacic20112073}; the whole system will be able to inject the electric power generated by photovoltaic panels (or other renewables) to the grid in a controlled and efficient way. As a consequence, it is necessary to develop a strategy for managing energy in relation to the load and the storages' constraints. Direct Current (DC) microgrids are attracting interest thanks to their ability to easily integrate modern loads, renewable sources and energy storages \cite{Piagi2006}, \cite{Iravani2007}, \cite{Guerrero2014LVDC}, \cite{Guerrero2013advancedcontrol} since most of them (like electric vehicles, batteries and photovoltaic panels) are naturally DC: therefore, in this paper a DC microgrid composed by a source, a load, two storages working in different time scales, and their connecting devices is considered. These microgrids need to interact with the already existing infrastructure, that is an Alternate Current (AC) grid: in this work connection of the DC microgrid with a main AC grid is described, and the dedicated interconnecting power device is considered.

The utilized approach is based on a "Plug and Play" philosophy: the global control will be carried out at local level by each actuator, according to distributed control paradigm. The controller is developed in a distributed way for stabilizing each part of the whole system, while performing power management in real time to satisfy the production objectives and assuring the stability of the interconnection to the main grid.

Control techniques for converters are a well known research field \cite{siraramiirez_silva-ortigoza_2006}, \cite{A_chen_damm_cdc_2014}, \cite{chen:hal-01159853}: nevertheless, it is common practice to consider the hypothesis to have full controllability of the system \cite{A_tahim_pagano_lenz_stramosk_2015}, \cite{A_bidram_davoudi_lewis_guerrero_2013} while in reality, due to technical reasons, this will probably not be true. For example, realistic DC/DC converters have an additional variable (a capacitor) on the source side \cite{Walker2004cascadedPVconnection}. When this capacitor is controlled, another one (the grid side capacitor) is left uncontrolled. As explained in \cite{Iovine2016DCmicrogrids}, this remaining dynamics is usually neglected by the assumption that it is connected (and implicitly stabilized) by an always stable strong main grid. Removing this assumption to consider a realistic grid implies that this dynamics needs to be taken into account when studying grid stability. In \cite{Iovine2016DCmicrogrids} the authors provide a rigourous stability analysis for a realistic DC MicroGrid. This work regards the connection of the aforementioned DC grid to an AC one, evaluating dynamics interaction when fulfilling request of a desired amount of active and reactive power from the AC grid. Stability conditions are evaluated for the interconnected case. %, as well as the
%In case of islanded grid (no power exchange between the two grids), the

The adopted control strategy is shown to work both in islanded mode than in grid-connected mode: the AC grid is seen as a controllable load, while the load directly connected to the DC grid is uncontrolled (it can be constant or time-varying, both problems are relevant \cite{Marx2012}, \cite{Hamache2014backstepping}). %The whole system provides protection against faults and suppresses interference, and has a positive impact on the behaviour of the complete electrical system. The final management system can be configurable and adaptable as needed.

This paper is organized as follows. In Section \ref{Sec_models} the model of the AC connected DC MicroGrid is introduced. Then in Section \ref{Sec_control_laws} the adopted control laws are introduced and stability requirements are proven to be satisfied. %Section \ref{Sec_physical_explanation} explains the necessity of the adopted analysis.
Section \ref{sec_simulation_results_energy} provides simulation results, while in Section \ref{sec_conclusion} conclusions are offered.
\vspace{-0.3cm}
\section{MicroGrid}\label{Sec_models}

%\subsection{Problem definition}\label{Sec_problem_definition}
%%
The reference framework is depicted in Figure \ref{Fig_microgrid_example}, where the DC microgrid connected to the main AC is represented. The targets would be to assure voltage stability in the DC grid while correctly feeding power to the load; if possible, power is also provided to the main grid regulating both active and reactive power. To each component of the microgrid (PV array, battery, supercapacitor) a DC/DC converter is connected: their dynamical models are described in the following, as well as the AC/DC converter that connects the DC grid to the AC one.
%i

%
The whole control objective is split in several tasks; the first one is to extract the maximum available power from the photovoltaic array. This maximum power production is obtained calculating the duty cycle in order to fix the voltage of the capacitor directly connected to the PV array to a given reference. %Backstepping theory is used to stabilize the DC/DC boost converter that connected the solar array to the DC network.

The focus then moves to the storage systems and their connection to the DC network. In this paper, two kinds of storage are considered: a battery, which purpose is to provide/absorb the power when needed, and a supercapacitor, which purpose is to stabilize the DC grid voltage in case of disturbances. DC/DC bidirectional converters are necessary to enable the two modes of functioning (charge and discharge). The battery is assimilated as a reservoir which acts as a buffer between the flow requested by the network and the flow supplied by the production sources, and its voltage is controlled by the DC/DC current converter. With this structure, the DC grid is able to provide a continuous supply of good quality energy. The model introduced in \cite{Lifshitz2015Battery} is here used for the supercapacitor. %Again, backstepping theory is used for providing stability.

Considering the availability of power, it can be provided to the main AC grid. An AC/DC converter is dedicated to manage the interconnection of the two grids; its purpose is to provide a desired amount of power to the AC grid, regulating both active and reactive power.

The converters present in this system must, in a distributed way, keep the stability of the mixed DC/AC network interconnecting all parts. The final management system can be configurable and adaptable as needed.
\vspace{-0.2cm}
\subsection{Assumptions}\label{subsec_assumptions}
In this paper two main assumptions are made: the first one is the existence of a higher level controller which provide references to be accomplished by the local controllers \cite{Olivares2014Trendsmicrogrid}; the second one is about a proper sizing of each component of the microgrid in order to have feasible power balance.
\begin{assumption}
A higher level controller provides references for the local controllers: these references change every fixed time interval $T$ and concern the amount of power needed for the next time interval and the desired voltage value for the DC grid. The time interval $T$ is decided by the high level controller according to the computational time needed for calculations. These references are about the desired voltage to impose to the PV array and to the battery to obtain the needed amount of power, $V_1^*$ and $V_4^*$ respectively, the desired voltage value for the DC grid, $V_9^*$, and the desired currents to have a proper amount of active and reactive power to provide to the AC grid, $I_{d}^*$ and $I_{q}^*$. The references must be able to take into account a proper charge/discharge rate power for the supercapacitor.
\end{assumption}
\begin{assumption}
The sizing of the photovoltaic array is performed according to total energy needed into a whole day, and the sizing of the battery and the supercapacitor are performed according to the energy balance in a $T$ time step, needed for selecting a new reference.
\end{assumption}

\vspace{-0.1cm}
\subsection{Grid modeling}
%In this Section the considered framework depicted in Figure \ref{Fig_microgrid_example} is described. The PV array, battery and supercapacitor are each one connected to the DC grid by a DC/DC converter, while the connection between the AC and DC grids is described by the AC/DC converter.
%In this Section the considered framework depicted in Figure \ref{Fig_microgrid_example} is described.
Here the circuital representation and the mathematical models are given, based on power electronics averaging technique for the DC/DC converters \cite{sanders_noworolski_liu_verghese_1991}, \cite{middlebrook_cuk_1977}, while the model for the AC/DC converter is given by \cite{Blasko1997ACDCconverter}, \cite{Dinavahi2009nonlinearcontrolVSC}.

%
%\nocite{Lindberg1996PWMeACDC}

\begin{figure}
%%%\vspace{-1cm}
  \centering\includegraphics[width=1\columnwidth]{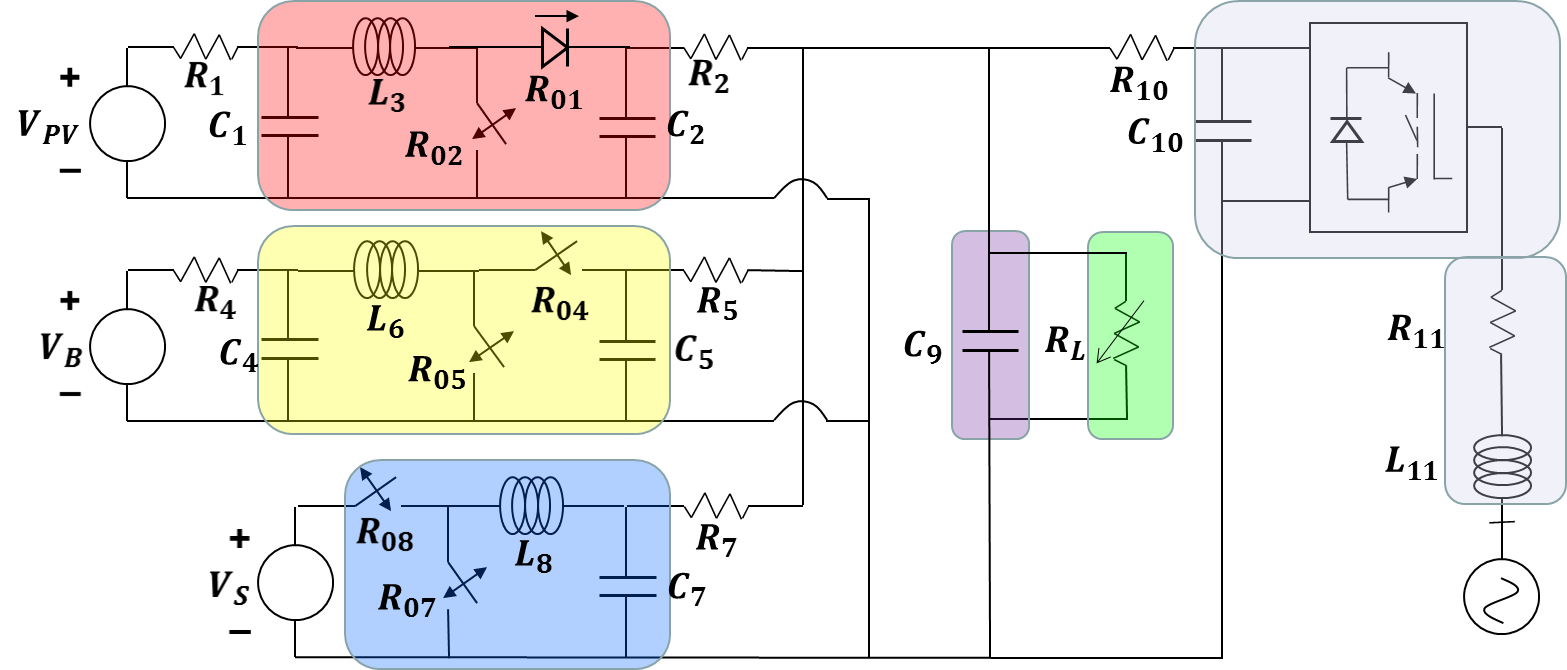}\\
  \caption{The considered framework: the red area describes the DC/DC boost converter connected to the PV array, while the yellow one the DC/DC bidirectional one connected to the battery. The DC/DC buck converter connecting the supercapacitor is in the blue area, while the AC/DC converter connecting the DC grid to the AC one is in the grey area. The uncontrollable load is in the green area, while the capacitor representing the DC grid in the violet one.}\label{Fig_microgrid_example}
  \vspace{-0.5cm}
\end{figure}
The resulting DC MicroGrid system is the composition of circuits in Figure \ref{Fig_microgrid_example}, generating the model introduced in the following:
\small
\begin{equation}\label{eq_all_syste}
x = \left[
\begin{array}{c}
x_{1} \\
%\alpha_1 \\
x_{2} \\
x_{3} \\
%\alpha_3 \\
x_{4} \\
%\alpha_4 \\
x_{5} \\
x_{6} \\
%\alpha_6 \\
x_{7} \\
%\alpha_7 \\
x_{8} \\
%\alpha_8 \\
x_{9} \\
x_{10} \\
x_{11} \\
x_{12}
\end{array}%
\right] =\left[
\begin{array}{c}
C_{1}\text{ capacitor voltage} \\
%\alpha_1 \text{ integral error} \\
C_{2}\text{ capacitor voltage} \\
L_{3}\text{ inductor current} \\
%\alpha_3 \text{ integral error} \\
C_{4}\text{ capacitor voltage} \\
%\alpha_4 \text{ integral error} \\
C_{5}\text{ capacitor voltage} \\
L_{6}\text{ inductor current} \\
%\alpha_6 \text{ integral error} \\
C_{7}\text{ capacitor voltage} \\
%\alpha_7 \text{ integral error} \\
L_{8}\text{ capacitor voltage} \\
%\alpha_8 \text{ integral error} \\
C_{9}\text{ capacitor voltage}\\
C_{10}\text{ capacitor voltage}\\
L_{11}\text{ inductor current }i_d \\
L_{11}\text{ inductor current }i_q
\end{array}%
\right]
\end{equation}
\normalsize
\footnotesize
\begin{equation}\label{EQ_DC_microgrid}
\begin{array}{l}
\begin{cases}
\dot{x}_{1}=-\frac{1}{R_{1}C_{1}}x_{1} - \frac{1}{C_{1}}x_{3} + \frac{1}{R_{1}C_{1}}V_{PV}\\ &\mbox{} \\
\dot{x}_{2}=-\frac{1}{R_{2}C_{2}}x_{2} + \frac{1}{C_{2}}x_{3} - \frac{1}{C_{2}}u_{1}x_{3} + \frac{1}{R_{2}C_{2}}x_{9} \\ &\mbox{}
\\
\dot{x}_{3}=\frac{1}{L_{3}}x_{1}-\frac{1}{L_{3}}x_{2}-\frac{R_{01}}{L_{3}}x_{3}+\frac{1}{L_{3}}x_{2}u_{1}+\frac{R_{01}-R_{02}}{L_{3}}x_{3}u_{1}
\\ &\mbox{} \\
\dot{x}_{4}=-\frac{1}{R_{4}C_{4}}x_{4}-\frac{1}{C_{4}}x_{6}+\frac{1}{R_{4}C_{4}}V_{B} \\ &\mbox{} \\
\dot{x}_{5}=-\frac{1}{R_{5}C_{5}}x_{5}+\frac{1}{C_{5}}x_{6}-\frac{1}{C_{5}}u_{2}x_{6}+\frac{1}{R_{5}C_{5}}{x}_{9}  \\ &\mbox{} \\
\dot{x}_{6}=\frac{1}{L_{6}}x_{4}-\frac{1}{L_{6}}x_{5}-\frac{R_{04}}{L_{6}}x_{6}+\frac{1}{L_{6}}x_{5}u_{2}\\ &\mbox{} \\
\dot{x}_{7}=-\frac{1}{R_{7}C_{7}}x_{7}-\frac{1}{C_{7}}x_{8}+\frac{1}{R_{7}C_{7}}{x}_{9}  \\ &\mbox{} \\
\dot{x}_{8}=\frac{1}{L_{8}}V_{S}u_3 -\frac{R_{08}}{L_{8}}x_{8} -\frac{1}{L_{8}} x_7  \\ &\mbox{} \\
\dot{x}_{9} = \frac{1}{C_{9}}\left(\frac{1}{R_{2}}(x_{2}-{x}_{9}) + \frac{1}{R_{5}}(x_{5}-{x}_{9})\right) + \\ &\mbox{} \\
\:\:\:\:\:\:\:+ \frac{1}{C_{9}}\left(\frac{1}{R_{7}}(x_{7}-{x}_{9})+ \frac{1}{R_{10}}(x_{10}-{x}_{9}) - \frac{1}{R_L }{x}_{9}\right)\\ &\mbox{} \\
\dot{x}_{10}=-\frac{3}{2C_{10}}\frac{1}{x_{10}}\left(v_{ld}x_{11}+v_{lq}x_{12} \right)+\frac{1}{R_{10}C_{10}}\left(x_{9}-x_{10}\right)  \\ &\mbox{} \\
\dot{x}_{11}=\frac{1}{L_{11}}\left[-R_{11}x_{11}+\omega x_{12}+ \frac{1}{2}{x}_{10}u_4 - v_{ld} \right]\\ &\mbox{} \\
\dot{x}_{12}=\frac{1}{L_{11}}\left[-R_{11}x_{12}-\omega x_{13}+ \frac{1}{2}{x}_{10}u_5 - v_{lq} \right]
\end{cases}
\end{array}%
\end{equation}
\normalsize
\small%\footnotesize
\begin{equation}
\dot{x}(t) = f(x(t)) + g(x(t),u(t),d(t))+d(t)
\end{equation}
\normalsize
\small%\footnotesize
\begin{equation}\label{eq_all_syste_u}
u = \left[
\begin{array}{ccccc}
u_{1} & u_{2} & u_{3} & u_{4} & u_{5}
%\alpha_1 \\
\end{array}%
\right]^T
\end{equation}
\normalsize
\small%\footnotesize
\begin{equation}\label{eq_all_syste_d}
d = \left[
\begin{array}{ccccccc}
V_{PV} & V_{B} & V_{S} & {R_L} & v_{ld} & v_{lq} & \omega
%%\alpha_1 \\
% \\
%\\
%\frac{1}
\end{array}%
\right]^T
\end{equation}
\normalsize
where $C_1$, $C_2$, $C_4$, $C_5$, $C_7$, $C_9$, $C_{10}$, $R_1$, $R_2$, $R_{01}$, $R_{02}$, $R_4$, $R_5$, $R_{04}$, $R_{05}$, $R_7$, $R_{07}$, $R_{08}$, $R_{10}$, $R_{11}$, $R_{L}$, $L_{3}$, $L_{6}$, $L_{8}$, $L_{11}$, are known positive values of the capacitors, resistances and the inductor. $V_{PV}>0$, $V_B>0$ are constant positive values of the PV array and the battery voltages, while $V_S$ is a slowing time varying positive value known at each time $t$ representing the voltage of the supercapacitor. $v_{ld}\geq0$ and $v_{lq}\geq0$ are the AC relative voltages, while $\omega$ is the frequency of the AC grid. The control inputs in $u$ are the duty cycles of the converters.

We can now refer the references to the state variables: the voltage value $V_1^*$ to impose the maximum power tracking provides a reference $x_1^*$ for dynamics $x_1$, while $V_4^*$ and $V_9^*$ refer to $x_4^*$ and $x_9^*$. The references $I_{d}^*$ and $I_{q}^*$ (relative to the desired active and reactive powers provided to the AC grid) for the currents are related to the dynamics $x_{11}$ and $x_{12}$, $x_{11}^*$ and $x_{12}^*$ respectively.

\section{Controllers}\label{Sec_control_laws}

Considering the hypothesis in Section \ref{subsec_assumptions}, given the constant values of the resistance $R_L$, voltages $x_1^*$ and $x_4^*$ that allow power balance in steady-state with respect to the demanded currents $x_{11}^*$ and $x_{12}^*$ at the desired voltage grid $x_9^*$, it is possible to state that:

\begin{theorem}\label{theo_}
Control inputs $u_1$, $u_2$, $u_3$, $u_4$, $u_5$ exist such that the system in (\ref{EQ_DC_microgrid}) is asymptotically stable in closed loop around the equilibrium point $x^e$;

\begin{equation}\label{eq_equilibrium}
x^e = \left[
\begin{array}{c}
x_{1}^e \\
x_{2}^e \\
x_{3}^e \\
x_{4}^e \\
x_{5}^e \\
x_{6}^e \\
x_{7}^e \\
x_{8}^e \\
x_{9}^e \\
x_{10}^e \\
x_{11}^e \\
x_{12}^e
\end{array}%
\right] =\left[
\begin{array}{c}
x_1^*\\
x_{2}^* \\
\frac{1}{R_{1}}(V_{PV} - x_{1}^*) \\
x_{4}^* \\
x_5^*\\
\frac{1}{R_{4}}(V_{B} - x_{4}^*)  \\
x_9^* \\
0 \\
x_9^*\\
x_{10}^*\\
x_{11}^*\\
x_{12}^*
\end{array}%
\right]
\end{equation}
\end{theorem}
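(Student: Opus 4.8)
The plan is to exploit the cascade structure of \eqref{EQ_DC_microgrid}: the system splits into the PV stage ($x_1,x_2,x_3$, input $u_1$), the battery stage ($x_4,x_5,x_6$, input $u_2$), the supercapacitor stage ($x_7,x_8$, input $u_3$) and the AC/DC stage ($x_{10},x_{11},x_{12}$, inputs $u_4,u_5$), all coupled only through the DC-bus voltage $x_9$, whose dynamics carry no control input. First I would pass to error coordinates $z_i=x_i-x_i^e$ and verify that $x^e$ is an equilibrium for a constant control $u^e$, which fixes the free components $x_2^*$, $x_5^*$, $x_{10}^*$ and the nominal duty cycles from the algebraic steady-state balance; the assumptions of Section~\ref{subsec_assumptions} guarantee feasibility, i.e.\ $u_i^e\in(0,1)$ and $x_{10}^e>0$.

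Next, each converter stage is stabilized by partial feedback linearization around $x^e$. In the PV boost stage, $u_1$ enters $\dot z_3$ affinely through the factor $\bigl(x_2+(R_{01}-R_{02})x_3\bigr)/L_3$, which is nonzero near the equilibrium, so one can cancel the drift and enforce $\dot z_3=-k_3 z_3$; this leaves a linear Hurwitz $(z_1,z_2)$ subsystem driven by $z_3$, by $z_9$ (through the $\tfrac{1}{R_2 C_2}x_9$ term in $\dot x_2$), and by higher-order terms, so a suitable quadratic storage function $W_1(z_1,z_2,z_3)$ makes this block input-to-state stable (ISS) with respect to $z_9$. The battery stage is treated identically with $u_2$ regulating $z_6$ (hence $z_4\to0$), yielding $W_2$. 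The supercapacitor buck stage is stabilized with $u_3$ regulating the inductor current $z_8$ so as to inject into the bus node the extra damping current that the equilibrium values $x_7^e=x_9^*$, $x_8^e=0$ make room for, yielding $W_3$. For the AC/DC stage $u_4,u_5$ enter $\dot z_{11},\dot z_{12}$ multiplied by $x_{10}/2$; while $x_{10}>0$ one cancels the $\omega$, $v_{ld}$, $v_{lq}$ couplings and enforces $\dot z_{11}=-k_{11}z_{11}$, $\dot z_{12}=-k_{12}z_{12}$, after which the scalar $z_{10}$ dynamics --- fed by $z_9$ through $1/(R_{10}C_{10})$ and by $z_{11},z_{12}$ through the power term --- is locally ISS, giving a quadratic $W_4$.

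The core of the proof is then the interconnection through $x_9$. Taking $W=\alpha_1 W_1+\alpha_2 W_2+\alpha_3 W_3+\tfrac12 C_9 z_9^2+\alpha_4 W_4$ and differentiating along the closed loop, the $\dot z_9$ equation contributes the mixed products $\tfrac{1}{R_2}z_2 z_9$, $\tfrac{1}{R_5}z_5 z_9$, $\tfrac{1}{R_7}z_7 z_9$, $\tfrac{1}{R_{10}}z_{10}z_9$, together with the intrinsic dissipation $-\bigl(\tfrac{1}{R_2}+\tfrac{1}{R_5}+\tfrac{1}{R_7}+\tfrac{1}{R_{10}}+\tfrac{1}{R_L}\bigr)z_9^2$. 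Each mixed product is dominated, via Young's inequality, by a small multiple of $z_i^2$ plus a multiple of $z_9^2$; choosing the splitting parameter so that the $z_9^2$ part does not exhaust the intrinsic dissipation (reinforced if needed by the supercapacitor damping), and then the weights $\alpha_i$ large enough to absorb the $z_i^2$ parts into the negative terms of $\alpha_i\dot W_i$, one obtains $\dot W\le -c\|z\|^2$ on a neighborhood of the origin. Together with positive definiteness of $W$, this gives local asymptotic stability of $x^e$, proving the theorem.

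The main obstacle I anticipate is twofold. First, the bus equation has no input of its own, so all the damping in $z_9$ comes from the passive resistive structure and from the supercapacitor; making the Young-inequality bookkeeping close simultaneously for the four coupling channels while keeping every converter's own $\dot W_i$ negative definite is the delicate quantitative step, and it is exactly why a weighted composite Lyapunov function (a small-gain type balance) is needed rather than a naive sum. Second, the AC/DC stage carries the genuine nonlinearity $\tfrac{1}{x_{10}}(v_{ld}x_{11}+v_{lq}x_{12})$ and the bilinear inputs $x_{10}u_4/2$, $x_{10}u_5/2$; one must restrict to the invariant region $x_{10}>0$ (i.e.\ $z_{10}>-x_{10}^e$) and verify that the sublevel set of $W$ used for the estimate lies inside it, so that the feedback linearization is well posed --- which is why the conclusion is local asymptotic stability around $x^e$ rather than a global statement.
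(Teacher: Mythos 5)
Your block decomposition, the partial feedback linearization of the actuated current loops, and the idea of a composite Lyapunov function all match the paper's architecture. Where you genuinely diverge is in how the interconnection through $x_9$ is closed, and this is where your proposal has a gap. The paper does not rely on a weighted small-gain/Young's inequality balance: it groups the bus-coupled capacitor energies into a single \emph{unit-weight} physical storage $V_{2,5,9,10}=\tfrac{C_2}{2}(x_2-x_2^*)^2+\tfrac{C_5}{2}(x_5-x_5^*)^2+\tfrac{C_{10}}{2}(x_{10}-x_{10}^*)^2+\tfrac{C_9}{2}x_9^2$, and then uses the supercapacitor branch as the dedicated actuator for the bus: a \emph{time-varying} virtual reference $z_7$ for the unactuated voltage $x_7$ is computed so that the $\tfrac{1}{R_7}(x_7-x_9)$ term in $\dot x_9$ cancels the remaining couplings and injects the damping $-\tfrac{1}{R_7}(x_9-x_9^*)^2$; $z_7$ is then enforced by backstepping through $x_8$ (reference $z_8$, control $u_3$, with $\dot z_7$, $\ddot z_7$ entering the law), and LaSalle's invariance principle finishes the argument since the resulting derivative is only negative semidefinite in the full state. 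Your proposal instead freezes $x_7^e=x_9^*$, $x_8^e=0$ as constant setpoints, which discards the paper's central device and leaves all the bus damping to the passive resistive structure.

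The concrete difficulty with your replacement is the direction of your weight selection. Each resistive coupling appears symmetrically in both equations: $\dot z_i$ contains $+\tfrac{1}{R_iC_i}z_9$ and $\dot z_9$ contains $+\tfrac{1}{R_iC_9}z_i$, so with weights $\alpha_i$ on the capacitor energies the channel contributes $-\tfrac{\alpha_i}{R_i}z_i^2-\tfrac{1}{R_i}z_9^2+\tfrac{1+\alpha_i}{R_i}z_iz_9$, a quadratic form with discriminant $(1-\alpha_i)^2/R_i^2\geq 0$: it is sign-indefinite for every $\alpha_i\neq 1$ and only negative semidefinite ($-\tfrac{1}{R_i}(z_i-z_9)^2$) at $\alpha_i=1$. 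Choosing the $\alpha_i$ ``large enough to absorb the $z_i^2$ parts'' therefore makes the cross terms grow faster than the dissipation and moves you away from closability, not toward it. The balance forces unit weights on the physical energies (which is exactly the paper's $V_{2,5,9,10}$), after which strict decrease in $z_9$ must come from $-\tfrac{1}{R_L}z_9^2$ and from actively shaping the $x_7$ leg --- i.e.\ from the backstepped $z_7$ you omitted --- followed by an invariance argument. You correctly flag this bookkeeping as ``the delicate quantitative step,'' but the recipe you give for it would fail; to repair the proof you should either adopt the paper's virtual-control/backstepping construction for $(x_7,x_8)$ or restrict the free weights to the coordinates not resistively tied to the bus and invoke LaSalle for the semidefinite remainder.
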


\begin{proof}
The proof is based on a composition of Lyapunov functions, a methodology described in \cite{kundur2004powerstability}. Indeed the control inputs
\small%\footnotesize
\begin{align}\label{formula_control_PV}
u_1 &= \frac{1}{{x_{2}+(R_{01}-R_{02})x_3}} \left[-x_{1} + x_{2} + R_{01}x_3 -L_3 v_1 \right]
\end{align}%
\normalsize
\small%\footnotesize
\begin{align}\label{formula_control_BAT}
u_{2}&= \frac{1}{x_{5}}\left(-x_{4}+ x_{5} + R_{04}x_{6} + L_6v_2\right)
\end{align}
\normalsize
\small%\footnotesize
\begin{equation}\label{eq_acdc_converter_u4}
u_4 = 2 \frac{1}{x_{10}}\left[v_{ld} + R_{11}x_{11} - \omega x_{12} - L_{11}v_4\right]
\end{equation}
\normalsize%
\small%\footnotesize
\begin{equation}\label{eq_acdc_converter_u5}
u_5 = 2 \frac{1}{x_{10}}\left[v_{lq} + R_{11}x_{12} + \omega x_{11} - L_{11}v_5\right]
\end{equation}
\normalsize%
with
\small%\footnotesize
\begin{align}\label{formula_control_PV_v1}
v_1 &=  K_3(x_3 - z_3) + \overline{K}_3\alpha_3  - C_1\overline{K}_1K_{1}^\alpha(x_1 - x_1^*)+\\
\nonumber&+\left(C_1K_1 - \frac{1}{R_1} \right)(K_1(x_1 - x_1^*)+ \overline{K}_1\alpha_1)
\end{align}
\begin{equation}\label{formula_eqpoint_x3_1}
z_3 = \frac{1}{R_{1}}(V_{PV} - x_{1}) + C_1K_1(x_1 - x_1^*) + C_1\overline{K}_1\alpha_1
\end{equation}
\normalsize
\small%\footnotesize
\begin{equation}\label{formula_alpha1}
\dot{\alpha}_1 = K_{1}^\alpha(x_1 - x_1^*) \:\:\:\:\:\:\:\: \dot{\alpha}_3 = K_{3}^\alpha(x_3 - z_3)
\end{equation}
\normalsize%
\small%\footnotesize
\begin{align}\label{formula_control_BAT_v2}
v_{2}=& -  K_6(x_6 - z_6) - \overline{K}_6 \alpha_6 + \overline{K}_4K_4^\alpha\left( x_4 -x_4^* \right) +\\
\nonumber & - \left( C_4 K_4 -\frac{1}{R_4}\right)\left( K_4(x_4 - x_4^*) + \overline{K}_4\alpha_4 \right)
\end{align}
\normalsize
\small%\footnotesize
\begin{equation}\label{Eq_battery_z6value}
z_6 = \left(\frac{1}{R_{4}}(V_{B}-x_{4}) + C_4K_4(x_4-x_4^*) +C_4\overline{K}_4\alpha_4\right)
\end{equation}
\normalsize
\small%\footnotesize
\begin{equation}\label{formula_alpha4}
\dot{\alpha}_4 = K_{4}^\alpha(x_4 - x_4^*) \:\:\:\:\:\:\:\:\:\dot{\alpha}_6 = K_{6}^\alpha(x_6 - z_6)
\end{equation}
\normalsize%
\small%\footnotesize
\begin{equation}\label{eq_acdc_converter_v4}
v_4 = K_{11}\left(x_{11} - x_{11}^* \right) + \overline{K}_{11}{\alpha}_{11}
\end{equation}
\normalsize%
\small%\footnotesize
\begin{equation}\label{eq_acdc_converter_v5}
v_5 = K_{12}\left(x_{12} - x_{12}^* \right) + \overline{K}_{12}{\alpha}_{12}
\end{equation}
\normalsize%
\small%\footnotesize
\begin{equation}\label{formula_alpha1314}
\dot{\alpha}_{11} = K_{11}^{\alpha}\left(x_{11} - x_{11}^* \right)\:\:\:\:\:\:\:\: \dot{\alpha}_{12} = K_{12}^{\alpha}\left(x_{12} - x_{12}^* \right)
\end{equation}
\normalsize
where the $\alpha_i$, $i=1,3,4,6,11,12$, are integral terms assuring zero error in steady state and the positive gains $K_i$, $\overline{K}_i$,  $K_{i}^\alpha$, $i=1,3,4,6,11,12$, are properly chosen, provide a Proportional Integral (PI) control action that feedback linearizes the dynamics $x_i$, $i=1,3,4,6,11,12$, providing asymptotic stability. Then it is possible to calculate positive definite Lyapunov functions $V_{1,3}$, $V_{4,6}$, $V_{11,12}$, such that their time derivative are negative definite. The Lyapunov function for the entire system is therefore selected as
%
%\small%\footnotesize
\begin{equation}\label{eq_lyap_entire}
V = V_{1,3} + V_{4,6} + V_{11,12} + V_{2,5,9,10} + V_{7,8}
\end{equation}
\normalsize
where $V_{7,8}$ and $V_{2,5,9,10}$ need to be described. We start from the last one, that describes the interconnection among the subsystems:
%
%\footnotesize
%\begin{equation}\label{Eq_interconnected_LyapunovACDC}
%  V_{2,5,9,10} = \frac{C_2}{2}e_2^2 + \frac{C_5}{2}e_5^2 + \frac{C_{10}}{2}e_{10}^2 + \frac{C_9}{2}x_9^2 %+ \frac{1}{2}\alpha_9^2
%\end{equation}
%\normalsize
%
\footnotesize
\begin{equation}\label{Eq_interconnected_LyapunovACDC}
  V_{2,5,9,10} = \frac{C_2}{2}(x_2-x_2^*)^2 + \frac{C_5}{2}(x_5-x_5^*)^2 + \frac{C_{10}}{2}(x_{10}-x_{10}^*)^2 + \frac{C_9}{2}x_9^2 %+ \frac{1}{2}\alpha_9^2
\end{equation}
\normalsize
%
%
%\footnotesize
%\begin{align}\label{Eq_interconnected_Lyapunov_dotACDC}
%  \dot{V}_{2,5,9,10} &= -\frac{1}{R_{2}}e_2^2 -\frac{1}{R_{5}}e_5^2 -\frac{1}{R_{10}}e_{10}^2 -\frac{1}{R_{7}}(x_9-x_9^*)^2\leq 0% - \alpha_9^2
%\end{align}
%\normalsize
According to the calculation of its time derivative, the dynamics $x_7$ can be used as control variable to obtain a negative semidefinite derivative; indeed, we can properly select a reference $z_7$ for $x_7$ such that
%
%\footnotesize
%\begin{align}\label{Eq_interconnected_Lyapunov_dotACDC}
%  \dot{V}_{2,5,9,10} &= -\frac{1}{R_{2}}(x_2-x_2^*)^2 -\frac{1}{R_{5}}(x_5-x_5^*)^2 -\frac{1}{R_{10}}(x_{10}-x_{10}^*)^2 -\frac{1}{R_{7}}(x_9-x_9^*)^2\leq 0% - \alpha_9^2
%\end{align}
%\normalsize
%
\small%\footnotesize
\begin{align}\label{Eq_interconnected_Lyapunov_dotACDC}
  \dot{V}_{2,5,9,10} &= -\frac{1}{R_{2}}(x_2-x_2^*)^2 -\frac{1}{R_{5}}(x_5-x_5^*)^2 +\\
  \nonumber &-\frac{1}{R_{10}}(x_{10}-x_{10}^*)^2 -\frac{1}{R_{7}}(x_9-x_9^*)^2\leq 0% - \alpha_9^2
\end{align}
\normalsize
To prove asymptotic stability the set $\Omega$ is considered: it is the largest invariant set of the set $E$ of all points where the Lyapunov function is not decreasing. $\Omega$ contains an unique point; then applying LaSalle's theorem, asymptotic stability of the equilibrium point can be established.
%
%\footnotesize
%\begin{align}\label{eq_omega _forsemidefACDC}
%  \Omega =& \{(e_2,e_5,e_{10},x_9) \: : \: x_2=x_2^*, \: x_5=x_5^*, \: x_{10}=x_{10}^*, \: x_9=x_9^* \} = \\
%\nonumber  =& \{(0,0,0,x_9^*)\}
%\end{align}
%\normalsize
%
\footnotesize
\begin{align}\label{eq_omega _forsemidefACDC}
  \Omega =& \{(x_2,x_5,x_{10},x_9) \: : \: x_2=x_2^*, \: x_5=x_5^*, \: x_{10}=x_{10}^*, \: x_9=x_9^* \} = \\
\nonumber  =& \{(x_2^*,x_5^*,x_{10}^*,x_9^*)\}
\end{align}
\normalsize
To impose the reference $z_7$ backstepping technique is used: a reference $z_8$ for the dynamics $x_8$ is selected in order to force the dynamics of $x_7$ to track the reference $z_7$, and a proper control law $u_3$ is calculated for the convergence of $x_8$ to $z_8$. The following Lyapunov function can be used to determine the control law:
\small%\footnotesize
\begin{equation}\label{EQ_Lyapunov_78}
V_{7,8} = \frac{1}{2}(x_7 - z_7)^2 + \frac{1}{2}(x_8 - z_8)^2
\end{equation}%
\normalsize
where $K_7>0$, $K_8>0$,
\small%\footnotesize
\begin{equation}\label{Eq_supercapacitor_z8value}
z_{8} = \frac{1}{R_{7}}(x_{9} - x_7) + C_7 K_7 (x_{7} - z_7) - C_7\dot{z}_7
\end{equation}
\normalsize
With the control law defined in the following
\small%\footnotesize
\begin{align}\label{formula_control_SCtheo}
u_3 =& \frac{1}{V_{S}}\left[x_7 + R_{08}x_8 + L_8 \dot{z}_{8} - L_8 v_3\right]
\end{align}
\normalsize
where
%
%\vspace{-0.3cm}
\small%\footnotesize
\begin{align}\label{formula_control_SCtheo_v3}
v_3 =&  K_8(x_8 - z_8) %+ K_7\left(K_7 C_7 - \frac{1}{R_7}\right)(x_7 - z_7)
\end{align}
\normalsize
\small%\footnotesize
\begin{equation}\label{Eq_supercapacitor_z8value}
\dot{z}_{8} = \frac{1}{R_{7}}\dot{x}_{9} - C_7 K_7 \dot{z}_7- K_7\left(K_7 C_7 - \frac{1}{R_7}\right)(x_7 - z_7) - C_7\ddot{z}_7
\end{equation}
\normalsize
the considered Lyapunov function in (\ref{EQ_Lyapunov_78}) has a negative definite time derivative:
%
%\small%\footnotesize
\begin{equation}\label{EQ_Lyapunov_78_dot_}
\dot{V}_{7,8} = -K_7(x_7 - z_7)^2 - K_8(x_8 - z_8)^2
\end{equation}%
\normalsize
Then the Lyapunov function $V$ in (\ref{eq_lyap_entire}) has the following time derivative that ensures asymptotic stability:
%
%\small%\footnotesize
\begin{equation}\label{}
  \dot{V} = \dot{V}_{1,3} + \dot{V}_{4,6} + \dot{V}_{11,12} + \dot{V}_{7,8}+ \dot{V}_{2,5,9,10} \leq 0
\end{equation}
\normalsize
\end{proof}
\begin{remark}
In accordance to the equilibria in (\ref{eq_equilibrium}), the power balance requirement is satisfied.
\end{remark}
The value of $z_7$ in steady-state can be written as
\footnotesize
\begin{align}\label{eq_z7_equilibrium}
  \frac{x_7^*}{R_{7}} &= \frac{{x}_{9}^*}{R_L} - \frac{1}{R_{2}}( x_2^*-{x}_{9}^*) - \frac{1}{R_{5}}( x_5^*-{x}_{9}^*) - \frac{1}{R_{10}}( x_{10}^*-{x}_{9}^*)+\frac{x_9^*}{R_{7}} %+ x_9^*
\end{align}
\normalsize
%
%\footnotesize
%\begin{align}\label{}
%  x_7^* &= R_7 \left[\frac{1}{R_L}{x}_{9}^* - \frac{1}{R_{2}}( x_2^*-{x}_{9}^*) +\\
%  \nonumber &- \frac{1}{R_{5}}( x_5^*-{x}_{9}^*) - \frac{1}{R_{10}}( x_{10}^*-{x}_{9}^*)\right] + x_9^*
%\end{align}
%\normalsize
%
where the values of $x_2^*$, $x_5^*$ and $x_{10}^*$ depends on the given equilibrium values and on the control inputs. At the equilibrium, the following condition is satisfied:
\small%\footnotesize
\begin{align}\label{eq_cond_equilibrium}
\frac{{x}_{9}^*}{R_L} = \frac{1}{R_{2}}( x_2^*-{x}_{9}^*) + \frac{1}{R_{5}}( x_5^*-{x}_{9}^*) + \frac{1}{R_{10}}( x_{10}^*-{x}_{9}^*) % + \frac{1}{R_{7}}( x_7^*-{x}_{9}^*)
\end{align}
\normalsize
Condition (\ref{eq_cond_equilibrium}) describes the power balance when all the "bricks" fit their target to provide/take the right amount of power; implicitly, it can be stated that $x_7^* = x_9^*$.

Theorem \ref{theo_} refers to an unconstrained problem: due to the physics of the devices and of the meaning of the controllers,  all the control inputs are bounded. The bounds we must consider are: $u_1\in[0,1]$, $u_2\in[0,1]$, $u_3\in[0,1]$, $\sqrt{u_4^2 + u_5^2}\leq1$ (see \cite{siraramiirez_silva-ortigoza_2006}, \cite{A_chen_damm_cdc_2014}).

The domain of operation for the state variables is restricted (e.g. $x_9\in[x_9^m,x_9^M]$, where $x_9^m\geq max(V_{PV},V_B)$ and $x_9^M\leq V_S$). However in \cite{Iovine2016DCmicrogrids} we proved that the set of initial states such that the above defined input constraints are satisfied, for any evolution of the controlled system, contains the equilibrium in its interior.

%According to the defined bounds and to the fact that the domain of operation for the variables is restricted to a narrow area (for example, $x_9\in[x_9^m,x_9^M]$, where $x_9^m\geq max(V_{PV},V_B)$ and $x_9^M\leq V_S$), we can state that the set of states starting from which the state trajectories of the controlled system are such that the input constraints are satisfied is not empty and contains the equilibrium in its interior, as already stated in \cite{Iovine2016DCmicrogrids}.

%Finally, we have shown that exist a Lyapunov function $V$ such as in () that has a semidefinite negative $\dot{V}$: applying LaSalle's theorem we can prove asymptotical convergence. Thereby, as $x_9\rightarrow x_9^*=x_9^e$, then $x_2\rightarrow x_2^e$, $x_5\rightarrow x_5^e$, $z_7\rightarrow x_7^*=x_9^*=x_7^e$, $z_8\rightarrow x_8^e=0$. Furthermore, $x_1\rightarrow x_1^*$ and $x_4\rightarrow x_4^*$ imply that $x_3\rightarrow x_3^e$ and $x_6\rightarrow x_6^e$.

\section{Simulations}\label{sec_simulation_results_energy}
In this section we present some simulations that show the results obtained using the proposed control inputs. Matlab has been used for obtaining such simulations. The values of the parameters are depicted in Tables \ref{table_PV}.

\begin{table}
  \centering
    \caption{Grid parameters.}\label{table_PV}
         \vspace{-0.1cm}
    \begin{tabular}{ | l | l | l | l |}
    \hline
    Parameter  & Value & Parameter  & Value \\ \hline
    $C_1$  & 100 mF & $L_{3}$ & 33 mH \\ \hline
    $C_2$ & 10 mF & $R_{01}$ & 10 m$\Omega$ \\ \hline
    $R_1$ & 100 m$\Omega$ & $R_{02}$ & 10 m$\Omega$ \\ \hline
    $R_2$ & 100 m$\Omega$ & $C_4$   & 100 mF \\ \hline
   %     & & & \\ \hline
    $C_5$ & 10 mF & $R_{04}$ & 10 m$\Omega$ \\ \hline
    $R_4$ & 100 m$\Omega$ & $R_{05}$ & 10 m$\Omega$ \\ \hline
    $R_5$ & 10 m$\Omega$ & $L_{6}$   & 33 mH \\ \hline
        $C_7$  & 10 mF & $L_{8}$ & 3.3 mH \\ \hline
    $R_{07}$ & 10 m$\Omega$  & $R_{08}$ & 10 m$\Omega$ \\ \hline
 %   $R_7$ & 0.1 $\Omega$ & $R_{05}$ & 0.001 $\Omega$ \\ \hline
    $R_7$ & 100 m$\Omega$ & $C_9$  & 0.1 mF  \\ \hline
        $R_{10}$ & 100 m$\Omega$  & $L_{11}$  & 3.3 mH \\ \hline
           $R_{11}$ & 10 m$\Omega$ & $C_{10}$  & 680 $\mu F$  \\ \hline
           $f$ & 50 Hz &   &   \\
    \hline
    \end{tabular}
         \vspace{-0.4cm}
%  \vspace{-0.7cm}
\end{table}

The simulation target is to correctly feed a load while maintaining the grid stability, which means to ensure no large variation in the DC grid voltage. The load is composed by an uncontrollable part, which is a resistance, and a controllable one, which is the AC grid. The simulation time is twenty seconds. The fixed value $x_9^*$ for the DC grid voltage is selected as $x_9^*=1000$ $V$. The secondary controller provides the references to be reached each time interval (1 second); during that period the introduced control laws bring the devices to operate in the desired points. Figure \ref{Fig_PV_BAT_SC_currentsRL} depicts the uncontrollable load; it is possible to see its piecewise constant behavior until the simulation time of twelve seconds, when it starts to be time-varying. Figure \ref{Fig_voltagesx11x12} describes the currents related to the power demanded by the AC grid; the current related to reactive power is always kept to zero reference, while the active one is demanded in a time window of eleven seconds after three seconds. The choice to send zero reactive power is due to the fact that the remuneration of power production is solely based on active power; selection of providing both of them is feasible if needed, and is the case when fully controlling frequency and voltage of the AC MicroGrid.

%\vspace{-0.4cm}
%
\begin{figure}%[h!]
%\vspace{-0.2cm}
  \centering
  \includegraphics[width=1\columnwidth]{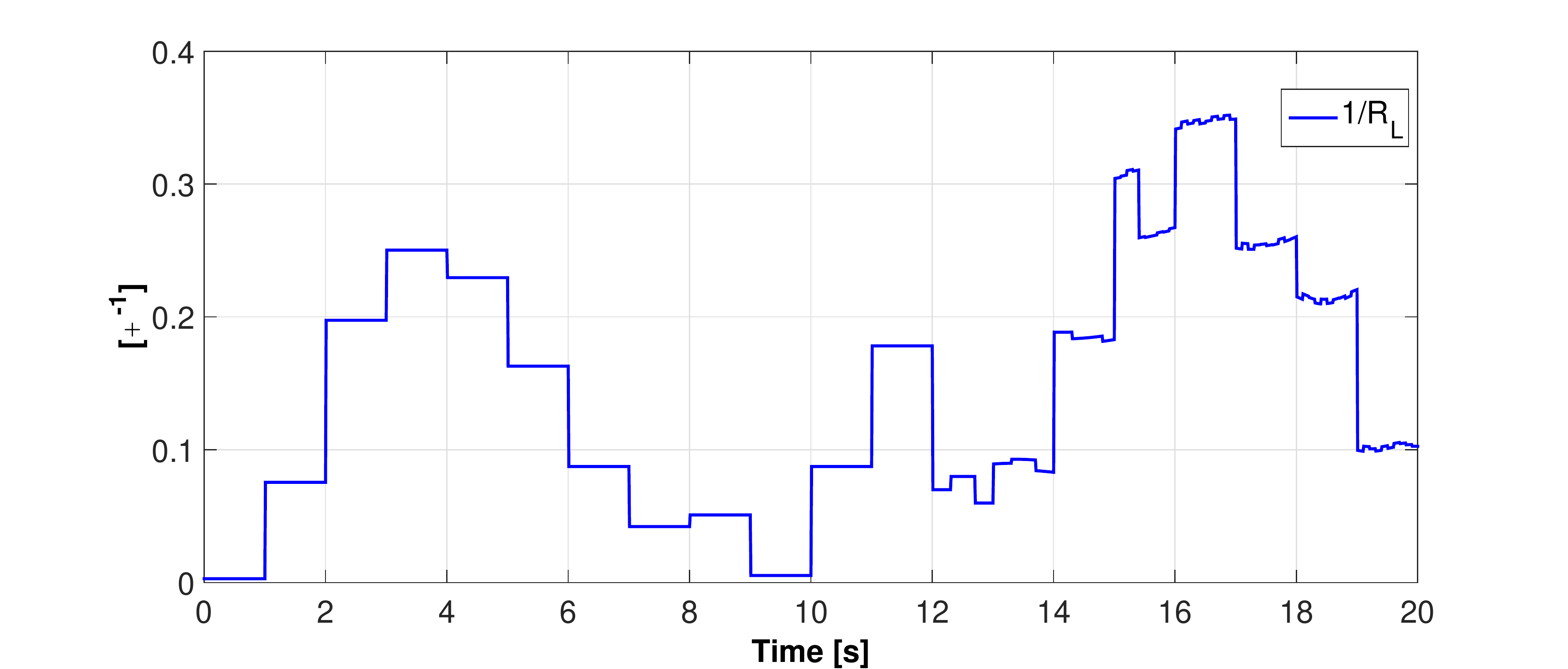}\\
  \vspace{-0.3cm}
  \caption{The load conductance $\frac{1}{R_L}$.}\label{Fig_PV_BAT_SC_currentsRL}
  \vspace{-0.7cm}
\end{figure}
%
%	\vspace{-0.5cm}
\begin{figure}%[h!]
	%\vspace{-0.2cm}
	\centering
	\includegraphics[width=1\columnwidth]{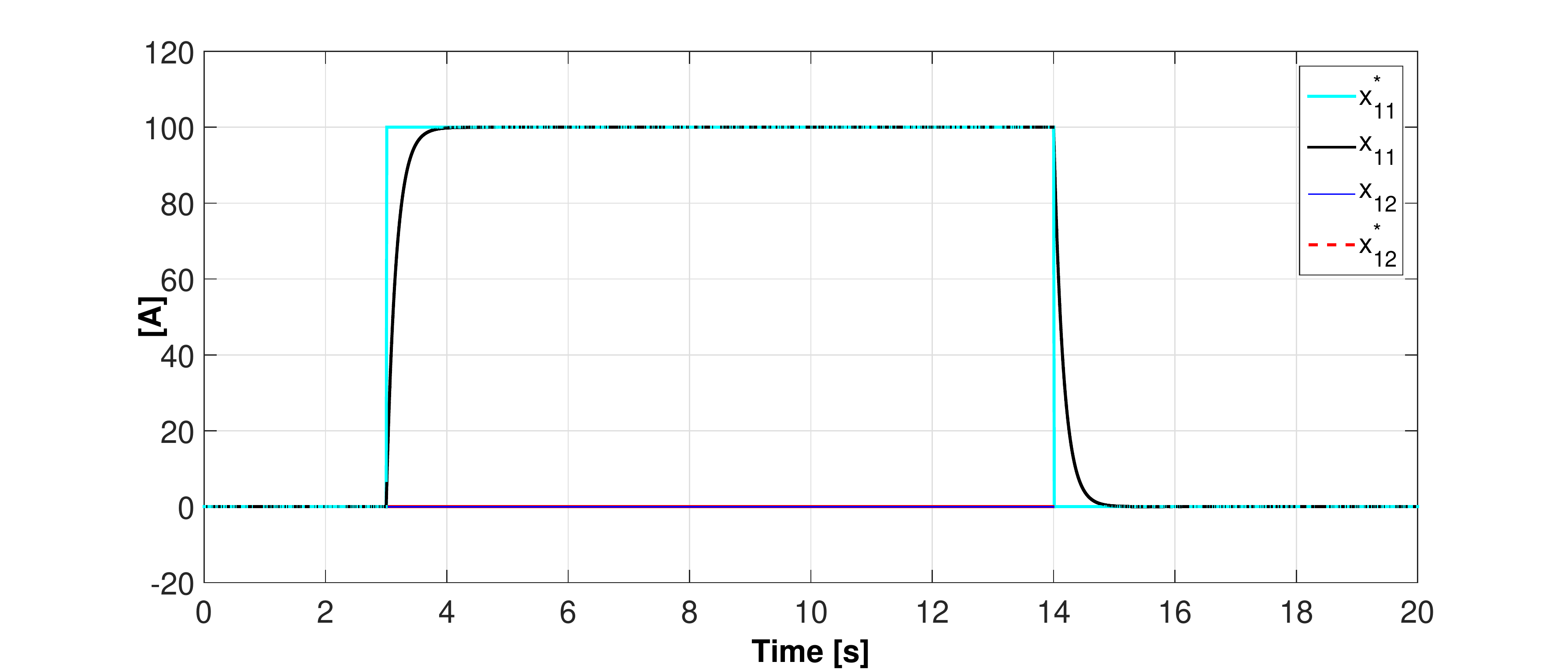}\\
	\vspace{-0.3cm}
	\caption{The currents in the inductance $L_{11}$ following their references: the current related to the active power ($x_{11}$) is the black line, tracking its cyan reference, while the one related to the reactive power ($x_{12}$) is depicted by the blue line which is tracking the red line representing its reference.}\label{Fig_voltagesx11x12}
	\vspace{-0.2cm}
\end{figure}
%
%	\vspace{-0.4cm}
\begin{figure}%[h!]
	%\vspace{-0.2cm}
	\centering
	\includegraphics[width=1\columnwidth]{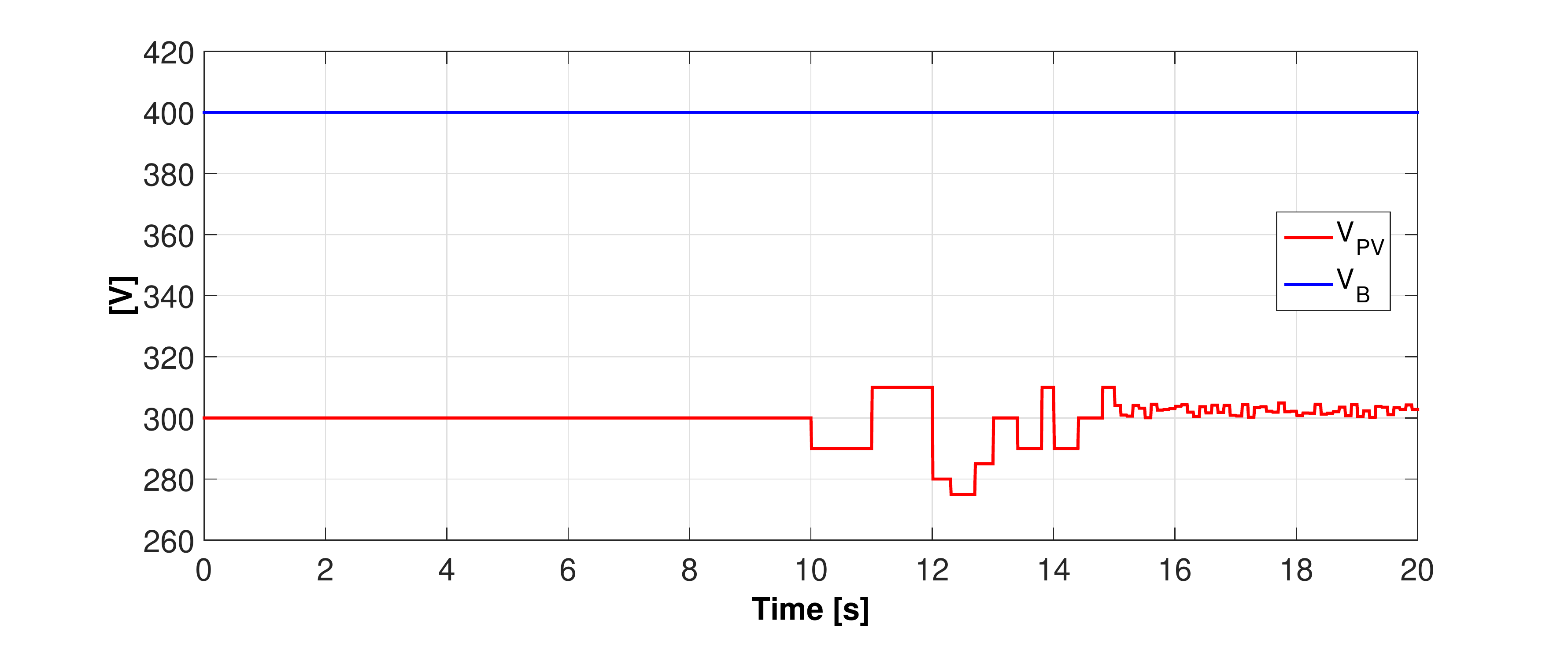}\\
	\vspace{-0.5cm}
	\caption[The voltages $V_{PV}$ and ${V_B}$]{The voltages of $V_{PV}$ (red line) and ${V_B}$ (blue line).}\label{Fig_PV_BAT_SC_voltagesVpvVb}
	\vspace{-0.4cm}
\end{figure}
%
%\vspace{-1cm}
%
%\vspace{-1cm}
\begin{figure}[h!]
	%\vspace{-0.2cm}
	\centering
	\includegraphics[width=1\columnwidth]{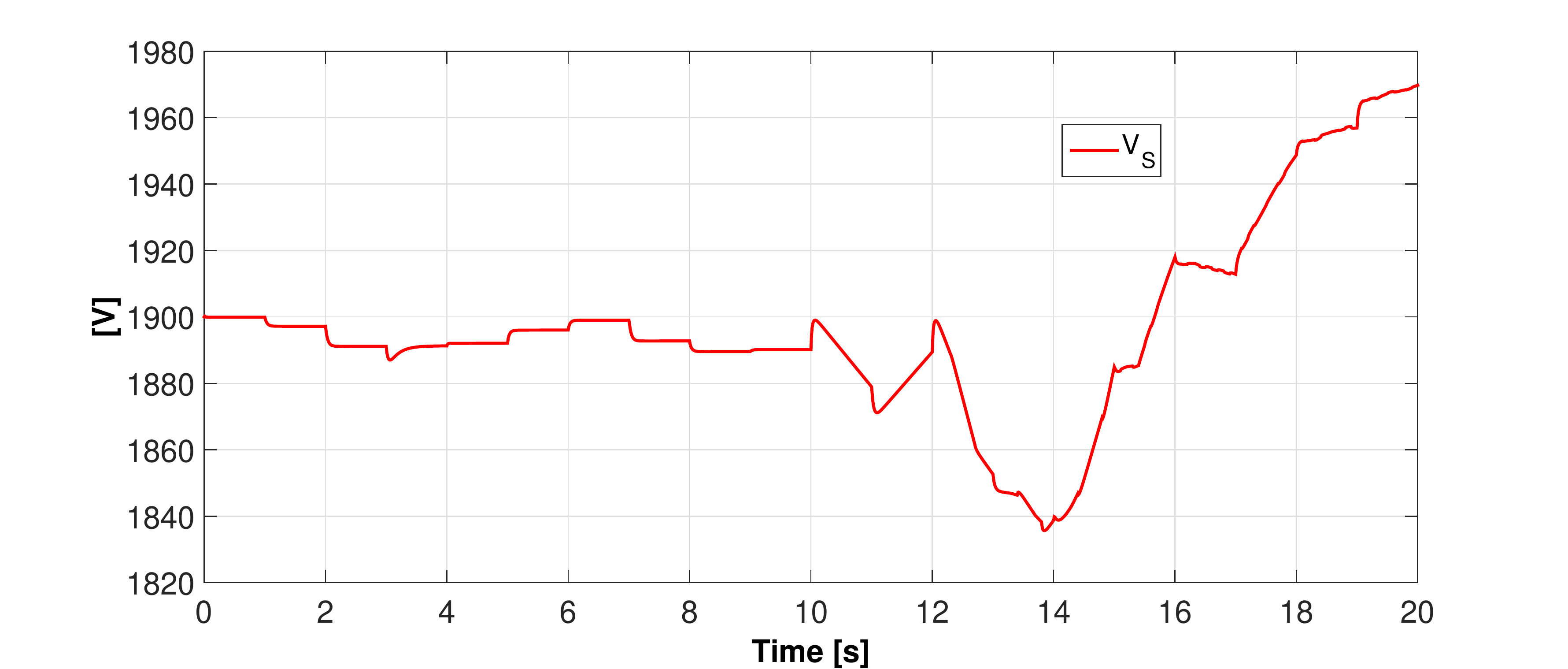}\\
	\vspace{-0.3cm}
	\caption{The voltage of the supercapacitor.}\label{Fig_PV_BAT_SC_voltagesVs}
	\vspace{-0.3cm}
\end{figure}
\begin{figure}[h!]
	%\vspace{-0.2cm}
	\centering
	\includegraphics[width=1\columnwidth]{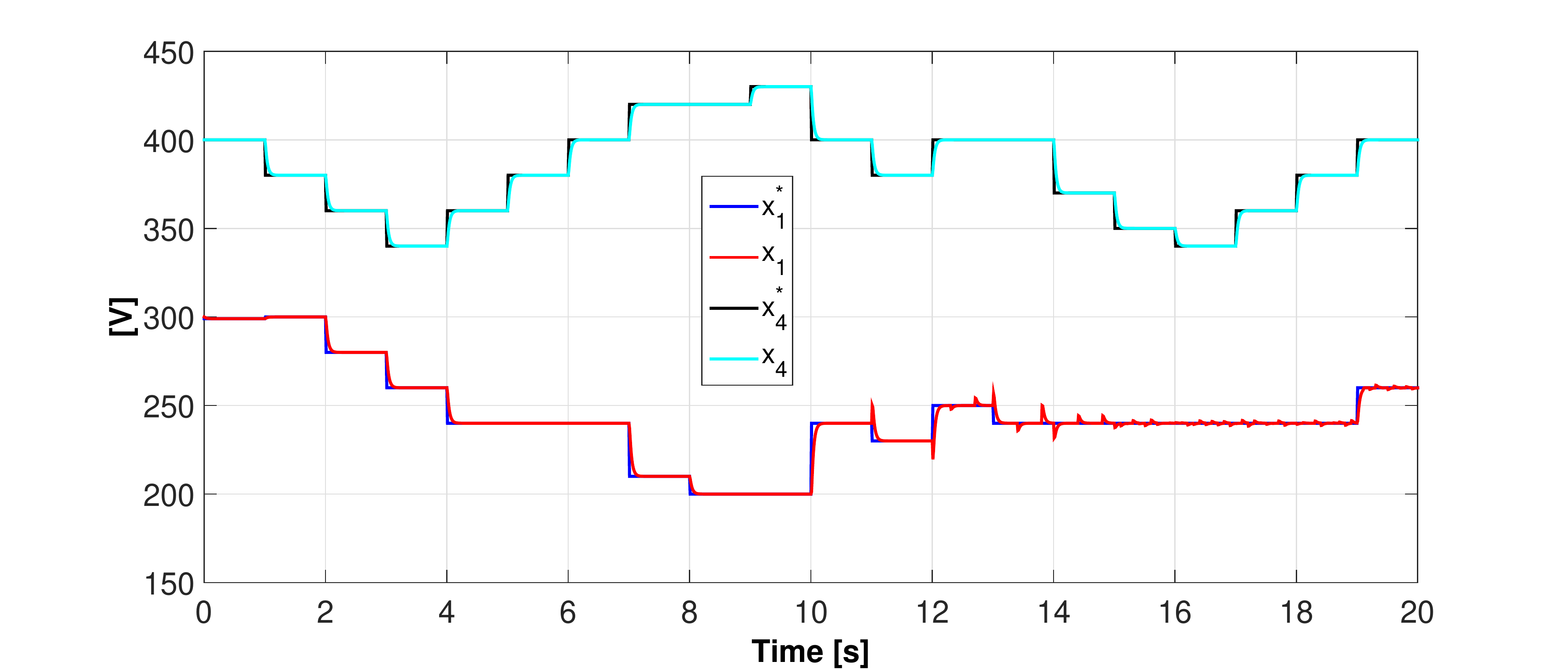}\\
	\vspace{-0.3cm}
	\caption[The voltages of ${C_1}$ and ${C_4}$]{The voltages of ${C_1}$ (red line) and ${C_4}$ (cyan line) following the desired references, the blue and black lines, respectively.}\label{Fig_PV_BAT_SC_voltagesx1x4}
	\vspace{-0.4cm}
\end{figure}
\begin{figure}[h!]
	%\vspace{-0.2cm}
	\centering
	\includegraphics[width=1\columnwidth]{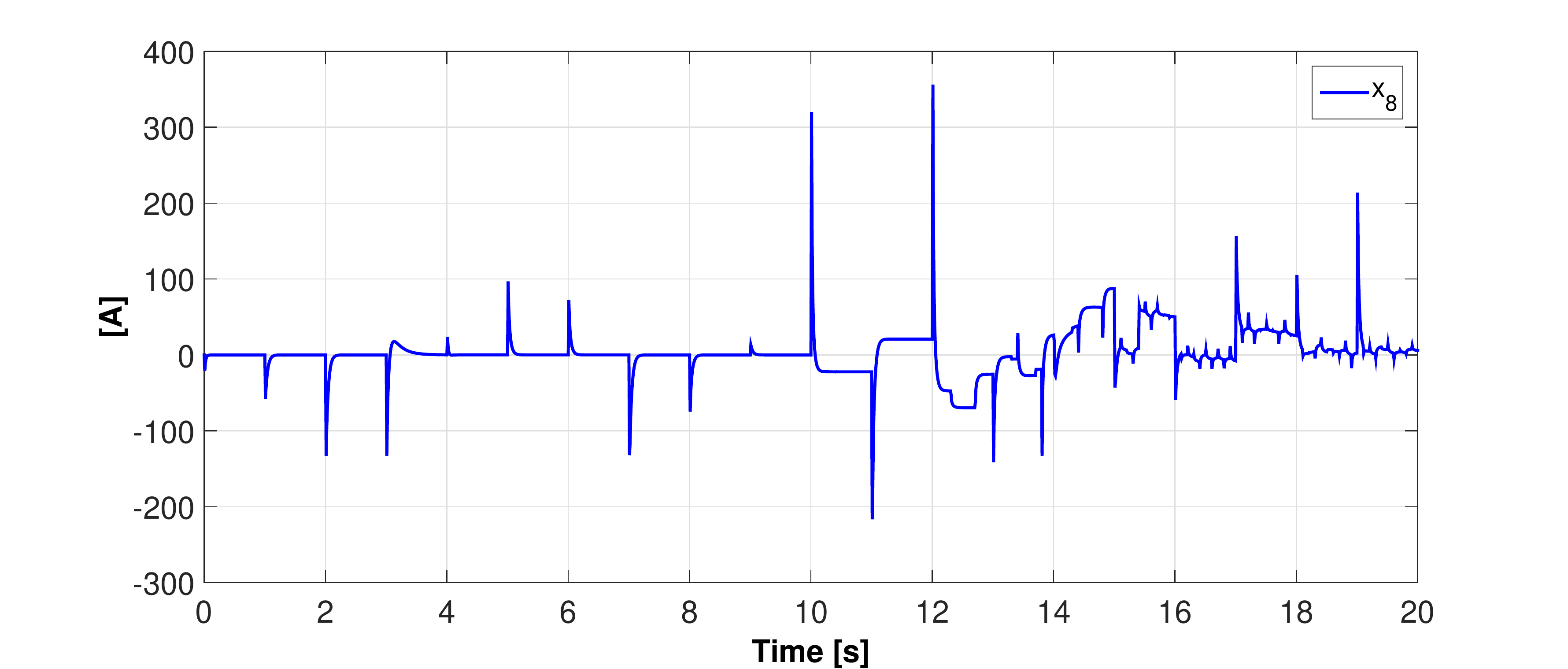}\\
	\vspace{-0.3cm}
	\caption{The current in the inductance $L_8$.}\label{Fig_PV_BAT_SC_currentsx8iL}
	\vspace{-0.2cm}
\end{figure}

%\vspace{-1cm}
%
The considered simulation framework contains different situations: as we have already seen, the load has step variations and a time varying part. Furthermore, disturbances acting on the PV voltage are considered (see Figure \ref{Fig_PV_BAT_SC_voltagesVpvVb}). Moreover, the case where the references do not fulfill the energy balance without the power provided by the supercapacitor is considered: there the supercapacitor will need to provide power during all the time window, and its reservoir will change over the time, as depicted in Figure \ref{Fig_PV_BAT_SC_voltagesVs}. Here we considered the voltage of the battery not to be affected by the current behavior; indeed a constant value is used to represent it because the battery is supposed to be sized in such a way that it is not affected by current dynamics over a time of twenty seconds.
The resulting current generated by the supercapacitor is then introduced in Figure \ref{Fig_PV_BAT_SC_currentsx8iL}; it is mainly due to the necessity of the supercapacitor to ensure DC grid voltage stability around the equilibrium value of $x_9^*$ when there is a mismatch due to the transient or disturbances, but in the second half of the simulation it is also providing the power needed to compensate power mismatch due to wrong reference choice by the higher level controller. Indeed, in accordance to the values of the load, the references $x_1^*$ and $x_4^*$ are obtained for the power balance target; if they are not sized for the power demand, the supercapacitor has also the described role.

%\vspace{-1cm}
%

%

As depicted in Figure \ref{Fig_PV_BAT_SC_voltagesx1x4}, the $C_1$ and $C_4$ capacitor voltages reach the desired values during the considered time step. Two different situations for the controllers are faced because the devices need two different treatments; we need from the PV array to start providing the highest level of power as soon as possible, while the battery needs to have a smooth behavior to preserve its life-time. The resulting voltage dynamics on the grid connected capacitors $C_2$, $C_5$, $C_{10}$, are modified by the current flow generated by the sources; all the dynamics are stable, as shown in Figure \ref{Fig_PV_BAT_SC_voltagesx2x5}. Their evolution is influenced by the value of the DC grid voltage, which is the capacitor $C_9$: its value over time is depicted in Figure \ref{Fig_PV_BAT_SC_currentsx7x9}. To satisfy stability constraints, in response to the load variations and to the missing power coming from the PV and the battery, the voltage of the capacitor $C_7$ reacts balancing the energy variation. It is possible to verify that the worst spikes correspond to a very high variations in the grid dynamics, but nevertheless they are less than 10\% of the values.
%

%\vspace{-1cm}
\begin{figure}%[h!]
%\vspace{-0.2cm}
  \centering
  \includegraphics[width=1\columnwidth]{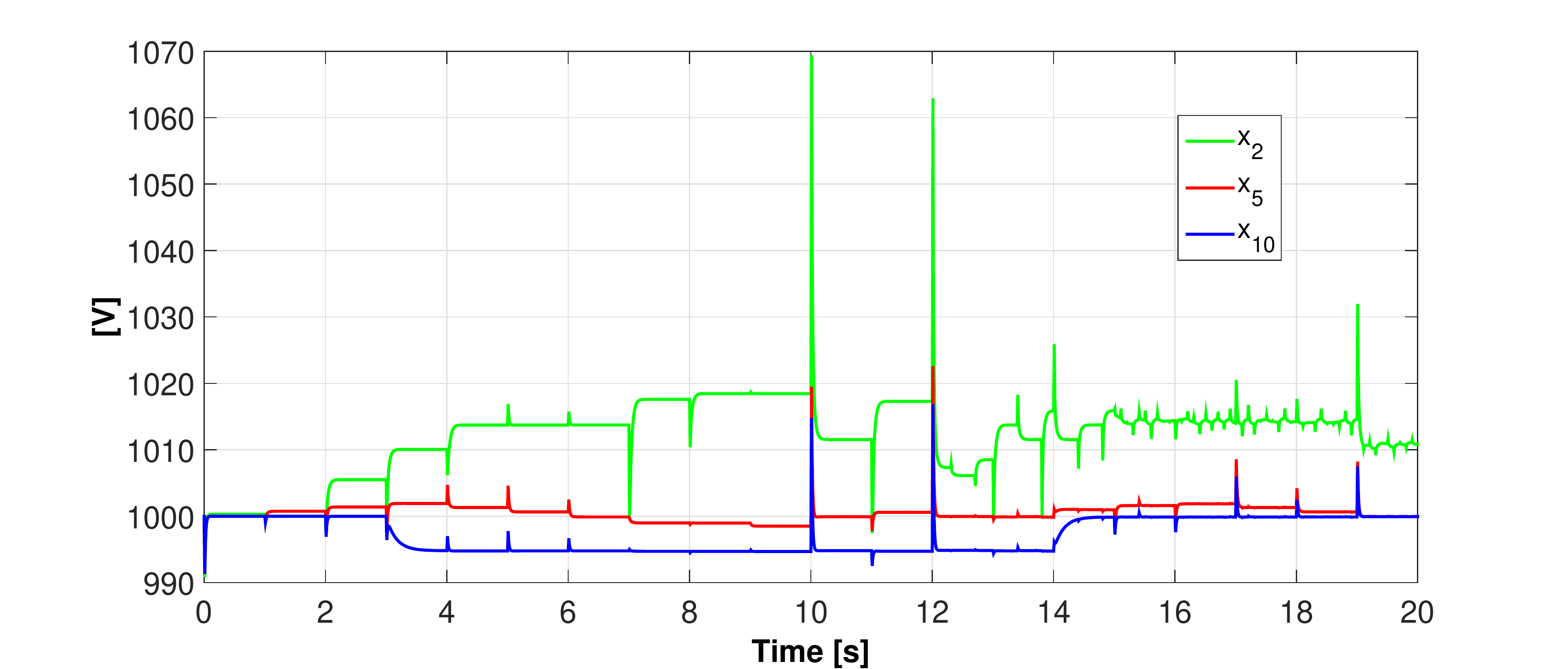}\\
   \vspace{-0.3cm}
  \caption[The voltages of ${C_2}$ and ${C_5}$]{The voltages of ${C_2}$ (green line), ${C_5}$ (red line) and $C_{10}$ (blue line).}\label{Fig_PV_BAT_SC_voltagesx2x5}
  \vspace{-0.5cm}
\end{figure}
%\vspace{-1cm}
%
%\vspace{-1cm}
\begin{figure}[h!]
%\vspace{-0.2cm}
  \centering
  \includegraphics[width=1\columnwidth]{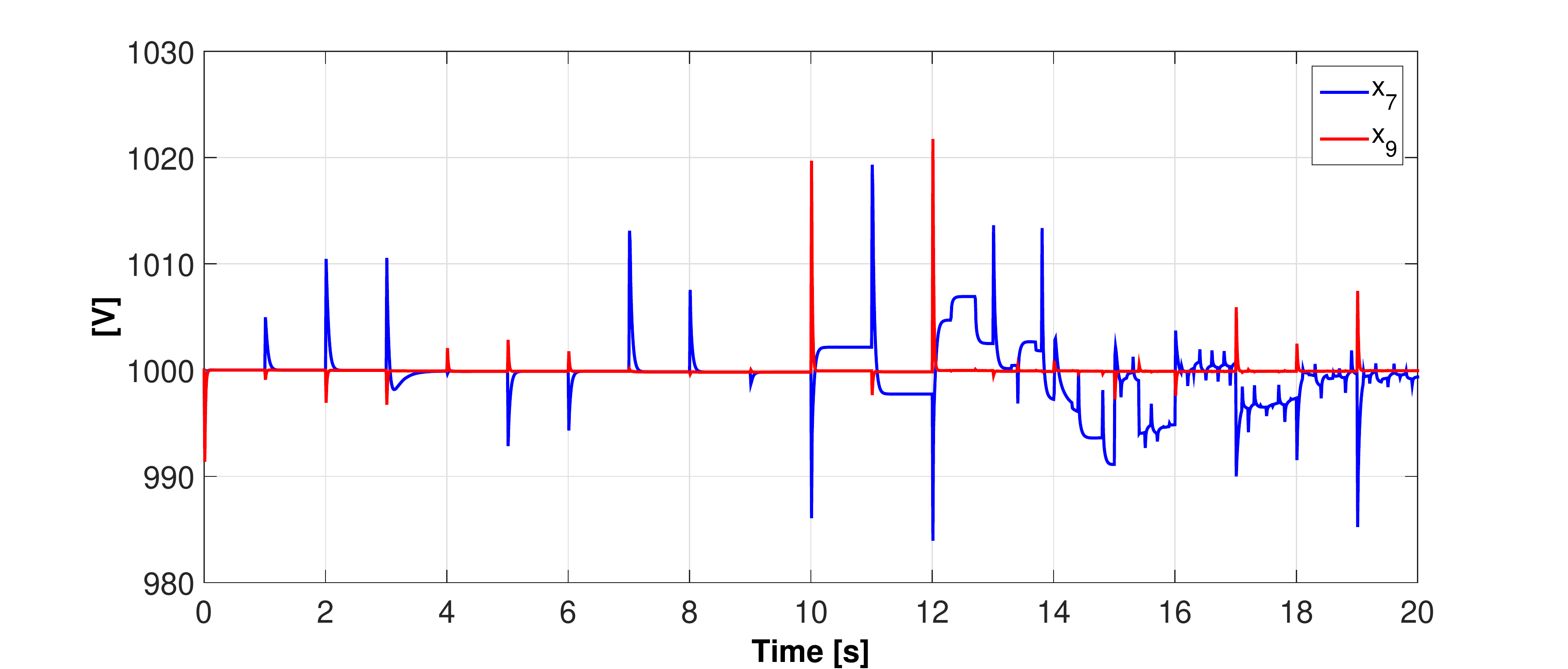}\\
    \vspace{-0.3cm}
  \caption[The voltages of ${C_7}$ and ${C_9}$]{The voltages of ${C_7}$ (blue line) and ${C_9}$ (red line).}\label{Fig_PV_BAT_SC_currentsx7x9}
  \vspace{-0.4cm}
\end{figure}
%\vspace{-0.2cm}
%

%\vspace{-1cm}
\begin{figure}[h!]
%\vspace{-0.2cm}
  \centering
  \includegraphics[width=1\columnwidth]{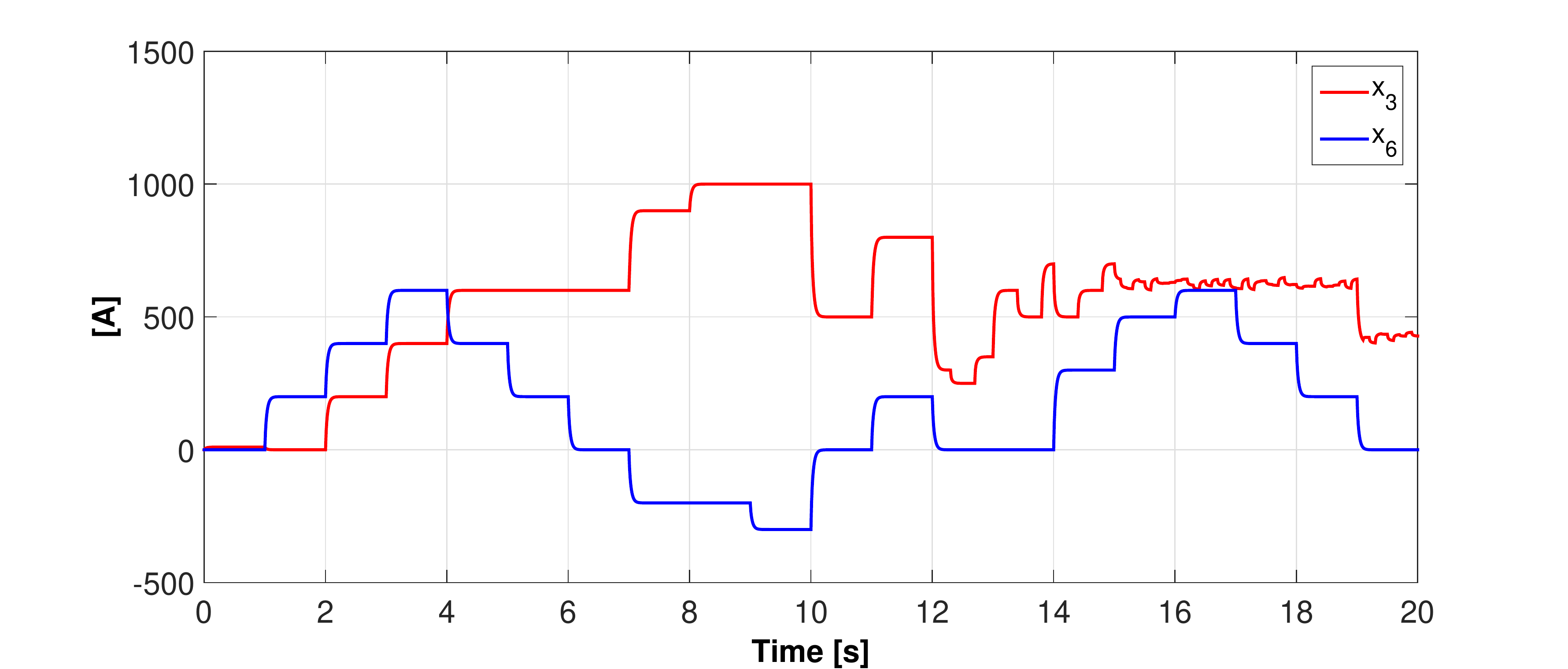}\\
    \vspace{-0.3cm}
  \caption{The currents in the inductances $L_3$ (red line) and $L_6$ (blue line).}\label{Fig_PV_BAT_SC_currents}
  \vspace{-0.3cm}
\end{figure}
%\vspace{-1cm}
%
%\vspace{-1cm}
\begin{figure}[h!]
\vspace{-0.2cm}
  \centering
  \includegraphics[width=1\columnwidth]{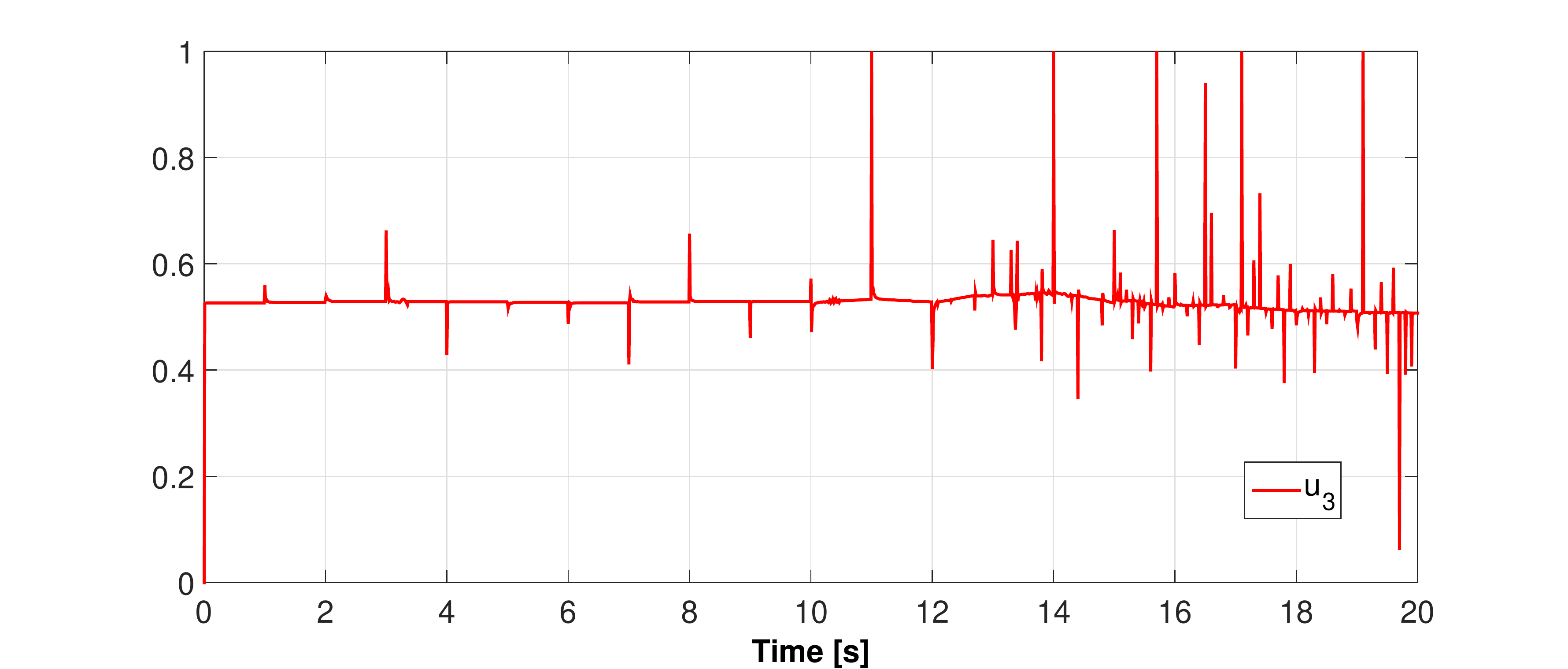}\\
    \vspace{-0.3cm}
  \caption{The control input $u_3$.}\label{Fig_PV_BAT_SC_controlu3}
  \vspace{-0.4cm}
\end{figure}
%\vspace{-1cm}
%

Figure \ref{Fig_PV_BAT_SC_currents} describes the current coming from the PV and the one related to the charge/discharge mode of the  battery: these dynamics are dependent on the voltages and related to them. The control input for the DC/DC converter connected to the supercapacitor is depicted in Figure \ref{Fig_PV_BAT_SC_controlu3}: its variation depends on the variations of voltage $V_S$ (see Figure \ref{Fig_PV_BAT_SC_voltagesVs}) and of the load (see Figures  \ref{Fig_voltagesx11x12} and \ref{Fig_PV_BAT_SC_currentsx8iL}).

Finally, we can state that the desired target to maintain DC grid voltage stability while providing a certain amount of power to a load composed by a controllable part and an uncontrollable one is ensured with the proposed control laws. The simulation describe result validity in a range of situations.

\vspace{-0.2cm}
\section{Conclusions}\label{sec_conclusion}
\vspace{-0.1cm}
In this paper the connection of a DC microgrid with the main AC grid is addressed. A DC grid composed by a renewable source and two kinds of storages, acting as energy and power reservoirs, is considered to solve the problem of correctly feeding a load; such load is composed by an uncontrollable part and a controllable one, which is the power provided to the AC grid. The modeling of the resulting grid composed by the dedicated connected devices is presented and control laws are obtained to properly satisfy requirements of voltage stability and power balance.
Formal conditions are introduced to describe the problem and rigorous analysis are carried out to obtain stability results. Simulations show that the proposed control action successfully fits the desired target to feed the load while keeping the voltage of the DC grid at a desired value.

\vspace{-0.5cm}
%\footnotesize

%\nocite{B_isidori_1995} \nocite{B_khalil_2002} \nocite{B_kundur_balu_lauby_1994}
%\nocite{A_chen_damm_cdc_2014} \nocite{Lifshitz2015Battery}
%\nocite{Olivares2014Trendsmicrogrid}
%\nocite{Hamache2014backstepping}
\footnotesize
\bibliographystyle{ieeetr}
\bibliography{mcnbib_traffic}

\begin{thebibliography}{10}

\bibitem{Eltawil2010PVconnectedgridproblems}
M.~A. Eltawil and Z.~Zhao, ``{Grid-connected photovoltaic power systems:
  Technical and potential problems. A review},'' {\em Renewable and Sustainable
  Energy Reviews}, vol.~14, no.~1, pp.~112 -- 129, 2010.

\bibitem{Barton2004energystorage}
J.~Barton and D.~Infield, ``Energy storage and its use with intermittent
  renewable energy,'' {\em Energy Conversion, IEEE Transactions on}, vol.~19,
  pp.~441--448, June 2004.

\bibitem{Krajacic20112073}
G.~Krajačić, N.~Duić, Z.~Zmijarević, B.~V. Mathiesen, A.~A. Vučinić, and
  M.~da~Graça~Carvalho, ``{Planning for a 100\% independent energy system
  based on smart energy storage for integration of renewables and $CO_2$
  emissions reduction},'' {\em Applied Thermal Engineering}, vol.~31, no.~13,
  pp.~2073 -- 2083, 2011.

\bibitem{Piagi2006}
P.~Piagi and R.~Lasseter, ``{Autonomous control of microgrids},'' in {\em Power
  Engineering Society General Meeting, 2006. IEEE}, pp.~8 pp.--, 2006.

\bibitem{Iravani2007}
N.~Hatziargyriou, H.~Asano, R.~Iravani, and C.~Marnay, ``Microgrids,'' {\em
  Power and Energy Magazine, IEEE}, vol.~5, pp.~78--94, July 2007.

\bibitem{Guerrero2014LVDC}
T.~Dragicevic, J.~Vasquez, J.~Guerrero, and D.~Skrlec, ``{Advanced LVDC
  Electrical Power Architectures and Microgrids: A step toward a new generation
  of power distribution networks.},'' {\em Electrification Magazine, IEEE},
  vol.~2, pp.~54--65, March 2014.

\bibitem{Guerrero2013advancedcontrol}
J.~Guerrero, P.~C. Loh, T.-L. Lee, and M.~Chandorkar, ``{Advanced Control
  Architectures for Intelligent Microgrids Part II: Power Quality, Energy
  Storage, and AC/DC Microgrids},'' {\em Industrial Electronics, IEEE
  Transactions on}, vol.~60, pp.~1263--1270, April 2013.

\bibitem{siraramiirez_silva-ortigoza_2006}
H.~J. Sira~Ramirez and R.~Silva-Ortigoza, {\em Control design techniques in
  power electronics devices}.
\newblock Springer, 2006.

\bibitem{A_chen_damm_cdc_2014}
Y.~Chen, G.~Damm, A.~Benchaib, and F.~Lamnabhi-Lagarrigue, ``{Multi-time-scale
  stability analysis and design conditions of a VSC terminal with DC voltage
  droop control for HVDC networks},'' in {\em 53rd IEEE Conference on Decision
  and Control}, pp.~3266--3271, Dec 2014.

\bibitem{chen:hal-01159853}
Y.~Chen, G.~Damm, A.~Benchaib, M.~Netto, and F.~Lamnabhi-Lagarrigue, ``{Control
  induced explicit time-scale separation to attain DC voltage stability for a
  VSC-HVDC terminal},'' in {\em {19th IFAC World Congress on International
  Federation of Automatic Control (IFAC 2014)}}, vol.~19, (Cape Town, South
  Africa), pp.~540--545, Aug. 2014.

\bibitem{A_tahim_pagano_lenz_stramosk_2015}
A.~P.~N. Tahim, D.~J. Pagano, E.~Lenz, and V.~Stramosk, ``Modeling and
  stability analysis of islanded dc microgrids under droop control,'' {\em IEEE
  Transactions on Power Electronics}, vol.~30, no.~8, pp.~4597--4607, 2015.

\bibitem{A_bidram_davoudi_lewis_guerrero_2013}
A.~Bidram, A.~Davoudi, F.~L. Lewis, and J.~M. Guerrero, ``{Distributed
  Cooperative Secondary Control of Microgrids Using Feedback Linearization},''
  {\em IEEE Transactions on Power Systems}, vol.~28, no.~3, pp.~3462--3470,
  2013.

\bibitem{Walker2004cascadedPVconnection}
G.~Walker and P.~Sernia, ``{Cascaded DC-DC converter connection of photovoltaic
  modules},'' {\em Power Electronics, IEEE Transactions on}, vol.~19,
  pp.~1130--1139, July 2004.

\bibitem{Iovine2016DCmicrogrids}
A.~Iovine, G.~Damm, E.~De~Santis, M.~D. Di~Benedetto, A.~Benchaib, and S.~B.
  Siad, ``Nonlinear control of a {DC} microgrid for the integration of
  photovoltaic panels,'' {\em Submitted, draft on arXiv
  http://arxiv.org/abs/1608.00844}.

\bibitem{Marx2012}
D.~Marx, P.~Magne, B.~Nahid-Mobarakeh, S.~Pierfederici, and B.~Davat, ``{Large
  Signal Stability Analysis Tools in DC Power Systems With Constant Power Loads
  and Variable Power Loads; A Review},'' {\em Power Electronics, IEEE
  Transactions on}, vol.~27, pp.~1773--1787, April 2012.

\bibitem{Hamache2014backstepping}
D.~Hamache, A.~Fayaz, E.~Godoy, and C.~Karimi, ``{Stabilization of a DC
  electrical network via backstepping approach},'' in {\em Industrial
  Electronics (ISIE), IEEE 23rd International Symposium on}, pp.~242--247, June
  2014.

\bibitem{Lifshitz2015Battery}
D.~Lifshitz and G.~Weiss, ``Optimal control of a capacitor-type energy storage
  system,'' {\em Automatic Control, IEEE Transactions on}, vol.~60,
  pp.~216--220, Jan 2015.

\bibitem{Olivares2014Trendsmicrogrid}
D.~Olivares, A.~Mehrizi-Sani, A.~Etemadi, C.~Canizares, R.~Iravani,
  M.~Kazerani, A.~Hajimiragha, O.~Gomis-Bellmunt, M.~Saeedifard,
  R.~Palma-Behnke, G.~Jimenez-Estevez, and N.~Hatziargyriou, ``Trends in
  microgrid control,'' {\em Smart Grid, IEEE Transactions on}, vol.~5,
  pp.~1905--1919, July 2014.

\bibitem{sanders_noworolski_liu_verghese_1991}
S.~Sanders, J.~Noworolski, X.~Liu, and G.~C. Verghese, ``Generalized averaging
  method for power conversion circuits,'' {\em Power Electronics, IEEE
  Transactions on}, vol.~6, pp.~251--259, Apr 1991.

\bibitem{middlebrook_cuk_1977}
R.~Middlebrook and S.~Cuk, ``A general unified approach to modelling
  switching-converter power stages,'' {\em International Journal of
  Electronics}, vol.~42, no.~6, pp.~521--550, 1977.

\bibitem{Blasko1997ACDCconverter}
V.~Blasko and V.~Kaura, ``{A new mathematical model and control of a
  three-phase AC-DC voltage source converter},'' {\em Power Electronics, IEEE
  Transactions on}, vol.~12, pp.~116--123, Jan 1997.

\bibitem{Dinavahi2009nonlinearcontrolVSC}
E.~Song, A.~Lynch, and V.~Dinavahi, ``Experimental validation of nonlinear
  control for a voltage source converter,'' {\em Control Systems Technology,
  IEEE Transactions on}, vol.~17, pp.~1135--1144, Sept 2009.

\bibitem{kundur2004powerstability}
P.~Kundur, J.~Paserba, V.~Ajjarapu, G.~Andersson, A.~Bose, C.~Canizares,
  N.~Hatziargyriou, D.~Hill, A.~Stankovic, C.~Taylor, T.~Van~Cutsem, and
  V.~Vittal, ``{Definition and classification of power system stability
  IEEE/CIGRE joint task force on stability terms and definitions},'' {\em Power
  Systems, IEEE Transactions on}, vol.~19, pp.~1387--1401, Aug 2004.

\end{thebibliography}

%\normalsize
\end{document}